\definecolor{red}{rgb}{1,0,0}
\definecolor{blue}{rgb}{.2,.2,.8}
\newtheorem{theorem}{Theorem}[section]
\newtheorem{corollary}[theorem]{Corollary}
\theoremstyle{definition}
\begin{document}

\title{Combinatorial interpretations of two identities of Guo and Yang}
\author{Mircea Merca\footnote{mircea.merca@profinfo.edu.ro}
	\\ 
	\small Department of Mathematics, University of Craiova, 200585 Craiova, Romania\\
	\small Academy of Romanian Scientists, Ilfov 3, Sector 5, Bucharest, Romania
}
\date{}
\maketitle

\begin{abstract}
  The restricted partitions in which the largest part is less than or equal to $N$ and the number of parts is less than or equal to $k$ were investigated by Andrews in \cite{Andrews76}. These partitions were extended recently by the author to the partitions into parts of two kinds. In this paper, we use a new class of restricted partitions into parts of two kinds to provide new combinatorial interpretations for two identities of Guo and Yang.
\\
\\
{\bf Keywords:} integer partitions, restricted partitions
\\
\\
{\bf MSC 2010:}  11P81, 11P83, 05A17
\end{abstract}

\section{Introduction}

A partition of $n$ into at most $k$ parts, each part less than or equal to $N$ is an unordered sum of $n$ that uses at most $k$ positive integers less than or equal to $N$. 
These partitions were investigated by Andrews in \cite{Andrews76}. 
Following the notation in \cite{Andrews76}, the number of such partitions will be denoted in this paper by $p(N,k,n)$. 
According to \cite[Theorem 3.1]{Andrews76}, the generating function of $p(N,k,n)$ is given by
\begin{equation}\label{a1}
\sum_{n=0}^{Nk} p(N,k,n) q^n = \begin{bmatrix}N+k\\N\end{bmatrix}_q,
\end{equation}
where
$$
\begin{bmatrix}n\\k\end{bmatrix}_q=
\begin{cases}
0, & \text{if $k<0$ or $k>n$,} \\
\dfrac{(q;q)_n}{(q;q)_k(q;q)_{n-k}}, & \text{otherwise}
\end{cases}
$$
is the Gaussian polynomial or the $q$-binomial coefficient. Recall that
$$(a;q)_{n}=
\begin{cases}
1,  &\text{for $n=0$,}\\
(1-a)(1-aq)(1-aq^2)\cdots(1-aq^{n-1}), &\text{for $n>0$}
\end{cases}
$$
is the $q$-shifted factorial and 
$$(a;q)_{\infty} = \lim_{n\to \infty} (a;q)_n.$$
Because the infinite product $(a;q)_{\infty}$ diverges when $a\neq 0$ and $|q|\geqslant 1$, whenever $(a;q)_{\infty}$ appears in a formula, we shall assume that $|q|<1$.	

Assume there are positive integers of two kinds: $\lambda$ and $\overline{\lambda}$.
We denote by $\overline{p}_r(N_1,N_2,k_1,k_2,n)$ the number of partitions of $n$ into parts of two kinds with
at most $k_1$ parts of the first kind, each part divisible by $r$ and less than or equal to $N_1r$, and
at most $k_2$ parts of the second kind, each part less than or equal to $N_2$. 
For example, $\overline{p}_2(2,3,2,2,4)=6$ because the six partitions in question are:
$$4,\quad 2+2,\quad  2+\overline{2},\quad 2+\overline{1}+\overline{1},\quad \overline{3}+\overline{1}\quad \text{and} \quad \overline{2}+ \overline{2}.$$

Recently, Merca \cite{Merca} considered some properties of the Gaussian polynomials and obtained few properties of $\overline{p}_1(N_1,N_2,k_1,k_2,n)$ (see for instance \cite[Theorems 3.1, 3.3, 3.4, 3.5, 4.1, 4.5, 5.1, 5.2, 6.1]{Merca}). In this paper, motivated by these results, we provide some similar results for $\overline{p}_r(N_1,N_2,k_1,k_2,n)$.
Two partition formulas involving $\overline{p}_2(N_1,N_2,k_1,k_2,n)$ and $\overline{p}_4(N_1,N_2,k_1,k_2,n)$ are derived in the last section of this paper as corollaries of two identities of Guo and Yang \cite{GuoYang}:
\begin{align}
& \sum_{k=0}^{\lfloor n/2 \rfloor} \begin{bmatrix}m+k\\k\end{bmatrix}_{q^2} \begin{bmatrix}m+1\\n-2k\end{bmatrix}_{q} q^{\binom{n-2k}{2}} 
 = \begin{bmatrix}m+n\\n\end{bmatrix}_{q}, \label{e2}\\
& \sum_{k=0}^{\lfloor n/4 \rfloor} \begin{bmatrix}m+k\\k\end{bmatrix}_{q^4} \begin{bmatrix}m+1\\n-4k\end{bmatrix}_{q} q^{\binom{n-4k}{2}} \notag \\
& \qquad\qquad\qquad = \sum_{k=0}^{\lfloor n/2 \rfloor} (-1)^k \begin{bmatrix}m+k\\k\end{bmatrix}_{q^2} \begin{bmatrix}m+n-2k\\n-2k\end{bmatrix}_{q}.\label{e3}
\end{align}

We remark that these identities are specializations of two convolutions for the complete and elementary symmetric functions (see \cite[Theorems 2.1 and 2.5]{Merca1}).

\section{Some general results}

The following convolution is a connection between the partition functions $p(N,k,n)$ and $\overline{p}_r(N_1,N_2,k_1,k_2,n)$.

\begin{theorem}\label{T2.1}
	For $N_1,N_2,k_1,k_2,n\geqslant 0$, $r\geqslant 1$,
	$$\overline{p}_r(N_1,N_2,k_1,k_2,n) = \sum_{j=0}^{\lfloor n/r \rfloor} p(N_1,k_1,j)p(N_2,k_2,n-rj).$$
\end{theorem}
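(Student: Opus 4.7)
My plan is to establish the identity bijectively by splitting any partition counted on the left according to the total weight contributed by its first-kind parts. Given a partition $\pi$ enumerated by $\overline{p}_r(N_1,N_2,k_1,k_2,n)$, write $\pi=\pi^{(1)}\cup\pi^{(2)}$, where $\pi^{(1)}$ collects the (unbarred) first-kind parts and $\pi^{(2)}$ collects the barred second-kind parts. By definition every part of $\pi^{(1)}$ is a positive multiple of $r$, so its weight has the form $rj$ for some integer $j$ with $0\leqslant j\leqslant\lfloor n/r\rfloor$, and then the weight of $\pi^{(2)}$ equals $n-rj$.

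Next I would set up the bijection on each piece. For fixed $j$, dividing each part of $\pi^{(1)}$ by $r$ turns $\pi^{(1)}$ into an ordinary partition of $j$ with at most $k_1$ parts, each at most $N_1$; this map is clearly invertible (multiply each part by $r$), so the number of admissible $\pi^{(1)}$ of weight $rj$ is $p(N_1,k_1,j)$. Similarly, $\pi^{(2)}$ is precisely an ordinary partition of $n-rj$ with at most $k_2$ parts, each at most $N_2$, giving $p(N_2,k_2,n-rj)$ choices. Summing the product $p(N_1,k_1,j)\,p(N_2,k_2,n-rj)$ over all feasible $j$ yields the desired convolution.

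The proof is essentially a bookkeeping argument and I do not expect a genuine obstacle. The only point that requires care is verifying that the range of summation $0\leqslant j\leqslant\lfloor n/r\rfloor$ is exactly right: for $j>\lfloor n/r\rfloor$ one has $rj>n$, forcing $p(N_2,k_2,n-rj)=0$, so extending the sum is harmless and the stated range is natural. If a purely analytic verification were preferred instead, one could note via \eqref{a1} that the generating function of $\overline{p}_r(N_1,N_2,k_1,k_2,n)$ factors as
\[
\begin{bmatrix}N_1+k_1\\N_1\end{bmatrix}_{q^r}\begin{bmatrix}N_2+k_2\\N_2\end{bmatrix}_{q},
\]
which is exactly the Cauchy product of the two generating series corresponding to $p(N_1,k_1,j)q^{rj}$ and $p(N_2,k_2,m)q^{m}$; reading off the coefficient of $q^n$ reproduces the claimed identity.
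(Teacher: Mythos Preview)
Your proof is correct and follows essentially the same bijective approach as the paper: split a two-kind partition into its first-kind and second-kind pieces, divide the first-kind parts by $r$, and record the weight $rj$ contributed by the first kind. The paper's argument is slightly terser but identical in substance; your added generating-function remark is precisely how the paper then derives Theorem~\ref{T2.2} from this result.
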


\begin{proof}
	Let
	$$
	\lambda_1+\cdots+\lambda_a+\overline{\lambda}_1+\cdots+\overline{\lambda}_b = n
	$$
	be a partition into parts of two kinds with $\lambda_i\leqslant N_1r$, $\overline{\lambda}_i\leqslant N_2$, $\lambda_i$ divisible by $r$, $0 \leqslant a \leqslant k_1$ and $0 \leqslant b \leqslant k_2$. 
    This partition can be rewritten as
	 $$
	 r\sum_{i=1}^{a}\lambda'_i+\sum_{i=1}^{b}\overline{\lambda}_i=n,
	 $$
	 where $\lambda'_i = \lambda / r$ and $\lambda'_i \leqslant N_1$.
	 The identity follows easily from this relation. 
\end{proof}

This convolution allows us to give the generating function for $\overline{p}_r(N_1,N_2,k_1,k_2,n)$.

\begin{theorem}\label{T2.2}
	For $N_1,N_2,k_1,k_2\geqslant 0$, $r\geqslant 1$,
	$$\sum_{n=0}^{N_1k_1r+N_2k_2} \overline{p}_r(N_1,N_2,k_1,k_2,n) q^n = \begin{bmatrix}N_1+k_1\\N_1\end{bmatrix}_{q^r} \begin{bmatrix}N_2+k_2\\N_2\end{bmatrix}_q.$$		
\end{theorem}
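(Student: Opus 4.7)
The plan is to derive the generating function directly from the convolution established in Theorem~\ref{T2.1}, combined with the Andrews generating function~\eqref{a1} applied in two different variables. The approach is essentially a straightforward two-variable unfolding, so I do not expect a serious obstacle; the only care needed is the bookkeeping with the summation ranges and the substitution $q \mapsto q^r$.

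First I would start from the left-hand side and substitute Theorem~\ref{T2.1}:
$$
\sum_{n=0}^{N_1k_1r+N_2k_2} \overline{p}_r(N_1,N_2,k_1,k_2,n)\, q^n
= \sum_{n \geqslant 0} q^n \sum_{j=0}^{\lfloor n/r \rfloor} p(N_1,k_1,j)\, p(N_2,k_2,n-rj).
$$
Since $p(N_1,k_1,j)=0$ whenever $j > N_1 k_1$ and $p(N_2,k_2,n-rj)=0$ whenever $n-rj > N_2 k_2$ or $n-rj<0$, extending the outer sum to all $n \geqslant 0$ introduces no extra terms. Next I would interchange the order of summation, replace the inner summation variable by $m = n - rj$, and factor the result:
$$
\sum_{j \geqslant 0} p(N_1,k_1,j)\, q^{rj} \sum_{m \geqslant 0} p(N_2,k_2,m)\, q^{m}.
$$

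Finally I would apply~\eqref{a1} to each factor. For the second factor, \eqref{a1} gives directly
$\sum_{m} p(N_2,k_2,m)\, q^m = \begin{bmatrix}N_2+k_2\\N_2\end{bmatrix}_q$. For the first factor, replacing $q$ by $q^r$ in~\eqref{a1} yields
$\sum_{j} p(N_1,k_1,j)\, (q^r)^{j} = \begin{bmatrix}N_1+k_1\\N_1\end{bmatrix}_{q^r}$. Multiplying these two expressions produces the right-hand side of the theorem, completing the proof. The only potential subtlety is justifying the substitution $q \mapsto q^r$ in~\eqref{a1}, but since~\eqref{a1} is a polynomial identity in $q$, this is automatic.
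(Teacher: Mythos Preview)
Your proof is correct and follows essentially the same approach as the paper's: both rely on Theorem~\ref{T2.1} together with~\eqref{a1} (once with $q\mapsto q^r$) and the Cauchy product of power series. The only cosmetic difference is direction---the paper starts from the product of Gaussian binomials and expands via Cauchy multiplication to reach the convolution of Theorem~\ref{T2.1}, whereas you start from the generating function, insert the convolution, and factor.
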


\begin{proof}
	Taking into account \eqref{a1}, we can write
	\begin{align*}
	& \begin{bmatrix}N_1+k_1\\N_1\end{bmatrix}_{q^r} \begin{bmatrix}N_2+k_2\\N_2\end{bmatrix}_q \\
	& \qquad = \left( \sum_{n=0}^{N_1k_1} p(N_1,k_1,n) q^{nr} \right) \left( \sum_{n=0}^{N_2k_2} p(N_2,k_2,n) q^n \right) \\
	& \qquad = \sum_{n=0}^{\infty} \left( \sum_{j=0}^{\lfloor n/r \rfloor} p(N_1,k_1,j)p(N_2,k_2,n-rj) \right) q^n,
	\end{align*}
	where we have invoked the Cauchy multiplication of two power series. The proof follows considering Theorem \ref{T2.1}.
\end{proof}

The recurrence relations for the Gaussian polynomials 
\begin{equation}\label{GR1}
\begin{bmatrix}N\\k\end{bmatrix}_{q^r} = q^{kr} \begin{bmatrix}N-1\\k\end{bmatrix}_{q^r} + \begin{bmatrix}N-1\\k-1\end{bmatrix}_{q^r}.
\end{equation}
and
\begin{equation}\label{GR2}
\begin{bmatrix}N\\k\end{bmatrix}_{q^r} = \begin{bmatrix}N-1\\k\end{bmatrix}_{q^r} +q^{(N-k)r} \begin{bmatrix}N-1\\k-1\end{bmatrix}_{q^r}.
\end{equation}
can be used to derive the following generalization of \cite[Theorem 3.3]{Merca}.

\begin{theorem}\label{2.3}
		For $N_1,N_2,k_1,k_2,n\geqslant 0$, $r\geqslant 1$,
		\begin{enumerate}
			\item $\overline{p}_r(N_1,N_2,k_1,k_2,n) - \overline{p}_r(N_1-1,N_2-1,k_1,k_2,n-k_1r-k_2)   $
			\item[] $\quad   - \overline{p}_r(N_1-1,N_2,k_1,k_2-1,n-k_1r) - \overline{p}_r(N_1,N_2-1,k_1-1,k_2,n-k_2)  $
			\item[] $\quad    - \overline{p}_r(N_1,N_2,k_1-1,k_2-1,n) = 0;$
			\item $\overline{p}_r(N_1,N_2,k_1,k_2,n) - \overline{p}_r(N_1-1,N_2-1,k_1,k_2,n) $
			\item[] $\quad    - \overline{p}_r(N_1-1,N_2,k_1,k_2-1,n-N_2) - \overline{p}_r(N_1,N_2-1,k_1-1,k_2,n-N_1r) $
			\item[] $\quad   - \overline{p}_r(N_1,N_2,k_1-1,k_2-1,n-N_1r-N_2) = 0;$
			\item $\overline{p}_r(N_1,N_2,k_1,k_2,n) - \overline{p}_r(N_1-1,N_2-1,k_1,k_2,n-k_1r)  $
			\item[] $\quad - \overline{p}_r(N_1-1,N_2,k_1,k_2-1,n-k_1r-N_2) - \overline{p}_r(N_1,N_2-1,k_1-1,k_2,n) $
			\item[] $\quad  - \overline{p}_r(N_1,N_2,k_1-1,k_2-1,n-N_2) = 0.$
			\end{enumerate}
\end{theorem}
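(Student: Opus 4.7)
All three identities are linear in $\overline{p}_r$, so the natural strategy is to pass to generating functions via Theorem~\ref{T2.2} and apply the recurrences \eqref{GR1}--\eqref{GR2} to the two Gaussian-polynomial factors $A = \begin{bmatrix}N_1+k_1\\N_1\end{bmatrix}_{q^r}$ and $B = \begin{bmatrix}N_2+k_2\\N_2\end{bmatrix}_q$. I will set $A' = \begin{bmatrix}N_1+k_1-1\\N_1\end{bmatrix}_{q^r}$ and $A'' = \begin{bmatrix}N_1+k_1-1\\N_1-1\end{bmatrix}_{q^r}$, and define $B'$, $B''$ analogously. Specialising \eqref{GR1} and \eqref{GR2} with top index $N_1+k_1$ and bottom index $N_1$ (so that $N-k = k_1$), I get $A = q^{N_1 r}A' + A''$ from \eqref{GR1} and $A = A' + q^{k_1 r}A''$ from \eqref{GR2}. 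Analogous decompositions hold for $B$, with exponents $N_2$ and $k_2$ in place of $N_1 r$ and $k_1 r$.

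A direct re-indexing identifies $A'$, $A''$, $B'$, $B''$ as the generating functions of $p(N_1,k_1-1,\cdot)$, $p(N_1-1,k_1,\cdot)$, $p(N_2,k_2-1,\cdot)$, $p(N_2-1,k_2,\cdot)$ respectively, with the first two taken in the variable $q^r$. Consequently, by Theorem~\ref{T2.2}, each of the four products $A'B'$, $A'B''$, $A''B'$, $A''B''$ is itself the generating function of $\overline{p}_r$ with the appropriate pair of parameters $(N_i,k_i)$ decreased by one.

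To prove part (1), I would apply \eqref{GR2} to both $A$ and $B$, obtaining $AB = A'B' + q^{k_2}A'B'' + q^{k_1 r}A''B' + q^{k_1 r + k_2}A''B''$; reading off coefficients of $q^n$ and translating each $q$-shift into the corresponding shift of the partition size reproduces the identity in (1) exactly. Part (2) is obtained in the same way but with \eqref{GR1} applied to both factors, which replaces the shifts $k_1 r, k_2$ by $N_1 r, N_2$. Part (3) mixes the two: apply \eqref{GR2} to $A$ and \eqref{GR1} to $B$, so that the $A$-factor contributes the shift $k_1 r$ while the $B$-factor contributes $N_2$, matching the asymmetric pattern of shifts in the statement. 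The computation is entirely mechanical; there is no genuine obstacle, and the only care required is in the bookkeeping of $q$-powers and the re-indexing of the Gaussian polynomials that comes out of each recurrence.
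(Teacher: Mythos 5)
Your proposal is correct and takes essentially the same route as the paper's proof: expand the product of Gaussian polynomials from Theorem~\ref{T2.2} by applying the recurrences \eqref{GR1} and \eqref{GR2} to each factor and then equate coefficients of $q^n$. The only cosmetic difference is that you write the factors with bottom index $N_i$ instead of $k_i$, which interchanges the roles of \eqref{GR1} and \eqref{GR2} (the paper gets part (1) from \eqref{GR1} on both factors, you from \eqref{GR2}), but by the symmetry of the $q$-binomial the resulting expansions, and hence the three identities, are identical.
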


\begin{proof}
		By \eqref{GR1}, we have
		\begin{align*}
		& \begin{bmatrix}N_1+k_1\\k_1\end{bmatrix}_{q^r}\begin{bmatrix}N_2+k_2\\k_2\end{bmatrix}_{q}	\\
		& \quad = \left( q^{k_1r} \begin{bmatrix}N_1-1+k_1\\k_1\end{bmatrix}_{q^r} + \begin{bmatrix}N_1+k_1-1\\k_1-1\end{bmatrix}_{q^r}\right) 
		\left( q^{k_2} \begin{bmatrix}N_2-1+k_2\\k_2\end{bmatrix}_q + \begin{bmatrix}N_2+k_2-1\\k_2-1\end{bmatrix}_q \right) \\
		& \quad = q^{k_1r+k_2} \begin{bmatrix}N_1-1+k_1\\k_1\end{bmatrix}_{q^r} \begin{bmatrix}N_2-1+k_2\\k_2\end{bmatrix}_{q} 
		+ q^{k_1r} \begin{bmatrix}N_1-1+k_1\\k_1\end{bmatrix}_{q^r} \begin{bmatrix}N_2+k_2-1\\k_2-1\end{bmatrix}_{q} \\
		& \qquad\qquad    + q^{k_2} \begin{bmatrix}N_1+k_1-1\\k_1-1\end{bmatrix}_{q^r} \begin{bmatrix}N_2-1+k_2\\k_2\end{bmatrix}_{q}
		+ \begin{bmatrix}N_1+k_1-1\\k_1-1\end{bmatrix}_{q^r} \begin{bmatrix}N_2+k_2-1\\k_2-1\end{bmatrix}_{q}.
		\end{align*}
		This allows us to write
		\begin{enumerate}
			\item[] $\displaystyle{\sum_{n=0}^{N_1k_1r+N_2k_2} \overline{p}_r(N_1,N_2,k_1,k_2,n) q^n}$
			\item[] $\displaystyle{\qquad\quad = \sum_{n=0}^{(N_1-1)k_1r+(N_2-1)k_2} \overline{p}_r(N_1-1,N_2-1,k_1,k_2,n) q^{n+k_1r+k_2}}$
			\item[] $\displaystyle{\qquad\qquad + \sum_{n=0}^{(N_1-1)k_1r+N_2(k_2-1)} \overline{p}_r(N_1-1,N_2,k_1,k_2-1,n) q^{n+k_1r}}$
			\item[] $\displaystyle{\qquad\qquad + \sum_{n=0}^{N_1(k_1-1)r+(N_2-1)k_2} \overline{p}_r(N_1,N_2-1,k_1-1,k_2,n) q^{n+k_2}}$
			\item[] $\displaystyle{\qquad\qquad + \sum_{n=0}^{N_1(k_1-1)r+N_2(k_2-1)} \overline{p}_r(N_1,N_2,k_1-1,k_2-1,n) q^{n}.}$
		\end{enumerate}
		The proof of the first relation follows equating the coefficient of $q^n$ in this identity. Similarly, considering \eqref{GR2} we obtain the second recurrence relation. The last relation follows combining \eqref{GR1} and \eqref{GR2}.
\end{proof} 

For $r\geqslant 1$, it is clear that the Gaussian polynomial 
$$\begin{bmatrix}N+k\\N\end{bmatrix}_{q^r}$$
is symmetric in $N$ and $k$. In addition, this polynomial is self-reciprocal. These properties allow us to derive the following relations for the partition function $\overline{p}_r(N_1,N_2,k_1,k_2,n)$.

\begin{theorem}\label{T2.4}
		For all $N_1,N_2,k,n \geqslant 0$, $r\geqslant 1$,
		\begin{align*}
		& \overline{p}_r(N_1,N_2,k_1,k_2,n) = \overline{p}_r(k_1,N_2,N_1,k_2,n) = \overline{p}_r(N_1,k_2,k_1,N_2,n) = \overline{p}_r(k_1,k_2,N_1,N_2,n);\\
		& \overline{p}_r(N_1,N_2,k_1,k_2,n) = \overline{p}_r(N_1,N_2,k_1,k_2,N_1k_1r+N_2k_2-n).
		\end{align*}
\end{theorem}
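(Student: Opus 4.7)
The plan is to pass to the generating function produced by Theorem~\ref{T2.2}, namely
$$F(q) := \sum_{n=0}^{N_1k_1r+N_2k_2} \overline{p}_r(N_1,N_2,k_1,k_2,n)\, q^n = \begin{bmatrix}N_1+k_1\\N_1\end{bmatrix}_{q^r} \begin{bmatrix}N_2+k_2\\N_2\end{bmatrix}_q,$$
and to translate two standard symmetries of the Gaussian polynomial into statements about the coefficients of $F(q)$.

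For the first chain of equalities I will apply the elementary symmetry $\begin{bmatrix}N+k\\N\end{bmatrix}_q = \begin{bmatrix}N+k\\k\end{bmatrix}_q$ to each of the two factors separately. Swapping $N_1$ with $k_1$, or $N_2$ with $k_2$, or both, in the arguments of $\overline{p}_r$ corresponds via Theorem~\ref{T2.2} to exactly the same polynomial $F(q)$, so comparing coefficients of $q^n$ yields all three required equalities simultaneously.

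For the second identity I will use the self-reciprocity of the Gaussian polynomial, $q^{Nks}\begin{bmatrix}N+k\\N\end{bmatrix}_{q^{-s}} = \begin{bmatrix}N+k\\N\end{bmatrix}_{q^s}$. Since the product of two self-reciprocal polynomials of degrees $d_1$ and $d_2$ is itself self-reciprocal of degree $d_1+d_2$, and the two factors in $F(q)$ have degrees $N_1 k_1 r$ and $N_2 k_2$, the polynomial $F(q)$ will be self-reciprocal of degree $N_1k_1r+N_2k_2$. Equating the coefficient of $q^n$ with that of $q^{N_1k_1r+N_2k_2-n}$ will produce the claim.

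I do not anticipate a real obstacle: both properties of the Gaussian polynomial are standard, and Theorem~\ref{T2.2} converts each directly into an assertion about $\overline{p}_r$. The only care point is tracking the degree so that the exponent shift in the self-reciprocity step matches the upper summation bound $N_1k_1r+N_2k_2$ coming from Theorem~\ref{T2.2}.
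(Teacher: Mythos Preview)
Your proposal is correct and matches the paper's own approach: the paper explicitly notes before the theorem that the Gaussian polynomial $\begin{bmatrix}N+k\\N\end{bmatrix}_{q^r}$ is symmetric in $N$ and $k$ and self-reciprocal, and then simply says the proof is similar to \cite[Theorem~3.4]{Merca}. Your write-up via Theorem~\ref{T2.2} and coefficient comparison is exactly the intended argument.
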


\begin{proof}
	The poof is similar to \cite[Theorem 3.4]{Merca}.
\end{proof}

For $r>1$, we remark that the Gaussian polynomial 
$$\begin{bmatrix}N+k\\N\end{bmatrix}_{q^r}$$
is not unimodal. Thus the third relation of \cite[Theorem 3.4]{Merca} can not be generalized in this way.

We denote by $\overline{Q}_r(N_1,N_2,k_1,k_2,n)$ the number of partitions of $n$ into parts of two kinds with
exactly $k_1$ distinct parts of the first kind, each part divisible by $r$ and less than or equal to $N_1$ and
exactly $k_2$ distinct parts of the second kind, each part less than or equal to $N_2$.
We have the following bijection between restricted partitions into parts of two kinds.

\begin{theorem}\label{T2.5}
	For $N_1,N_2,k_1,k_2,n\geqslant 0$, $r\geqslant 1$,
	$$\overline{Q}_r(N_1,N_2,k_1,k_2,n) = \overline{p}_r\Bigg( N_1-k_1,N_2-k_2,k_1,k_2,n-r\binom{k_1+1}{2}-\binom{k_2+1}{2}\Bigg).$$
\end{theorem}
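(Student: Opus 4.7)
The plan is to establish a bijection between partitions counted by $\overline{Q}_r(N_1,N_2,k_1,k_2,n)$ and those counted by $\overline{p}_r(N_1-k_1,N_2-k_2,k_1,k_2,n-r\binom{k_1+1}{2}-\binom{k_2+1}{2})$, by using the classical "subtract the staircase" transformation applied separately to each kind of part.

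First I would take an arbitrary partition $\pi$ enumerated by $\overline{Q}_r$. Write its first-kind parts as $r\alpha_1>r\alpha_2>\cdots>r\alpha_{k_1}$ with the $\alpha_i$ distinct positive integers bounded by $N_1$, and its second-kind parts as $\beta_1>\beta_2>\cdots>\beta_{k_2}$ with the $\beta_j$ distinct positive integers bounded by $N_2$. Define $\mu_i=\alpha_i-(k_1+1-i)$ and $\nu_j=\beta_j-(k_2+1-j)$. The distinctness of the $\alpha_i$ and $\beta_j$ forces $\mu_1\geqslant\mu_2\geqslant\cdots\geqslant\mu_{k_1}\geqslant 0$ and $\nu_1\geqslant\nu_2\geqslant\cdots\geqslant\nu_{k_2}\geqslant 0$, with $\mu_1\leqslant N_1-k_1$ and $\nu_1\leqslant N_2-k_2$. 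Dropping any zero parts then produces a partition with at most $k_1$ first-kind parts, each divisible by $r$ and bounded by $(N_1-k_1)r$, and at most $k_2$ second-kind parts, each bounded by $N_2-k_2$.

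Next I would track the sum. Using $\sum_{i=1}^{k_1}(k_1+1-i)=\binom{k_1+1}{2}$ and the analogous identity for the second kind, the size of the new partition is
$$r\sum_{i=1}^{k_1}\mu_i+\sum_{j=1}^{k_2}\nu_j=n-r\binom{k_1+1}{2}-\binom{k_2+1}{2},$$
which is exactly the size required by the right-hand side of the asserted identity.

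Finally I would check that the correspondence is reversible: given a partition enumerated by $\overline{p}_r(N_1-k_1,N_2-k_2,k_1,k_2,\,\cdot\,)$, pad its first-kind and second-kind parts with zeros up to $k_1$ and $k_2$ entries respectively, then add back the staircases $(k_1,k_1-1,\dots,1)$ and $(k_2,k_2-1,\dots,1)$, recovering strictly decreasing sequences of positive integers within the required bounds. Since the two maps are mutual inverses, the two partition counts coincide. I do not anticipate any serious obstacle; the only thing to take care with is the bookkeeping of the bound $N_1$ (meaning the parts $r\alpha_i$ are bounded by $N_1 r$, consistent with the convention fixed for $\overline{p}_r$) so that the staircase fits without violating the maximum.
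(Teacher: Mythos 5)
Your bijection is correct and is essentially the argument the paper intends: the paper's proof simply defers to the staircase construction behind \cite[Theorem 6.1]{Merca}, which is exactly the subtract-the-staircase map you apply separately to each kind of part, together with the size bookkeeping $r\binom{k_1+1}{2}+\binom{k_2+1}{2}$. Your reading of the bound on first-kind parts (namely $r\alpha_i\leqslant N_1r$, i.e.\ $\alpha_i\leqslant N_1$) is the intended one, as the generating function in Theorem \ref{T2.6} confirms, so no further repair is needed.
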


\begin{proof}
	The poof is similar to \cite[Theorem 6.1]{Merca}.
\end{proof}

The generating function for $\overline{Q}_r(N_1,N_2,k_1,k_2,n)$ can be easily derived from Theorems \ref{T2.2} and \ref{T2.5}.

\begin{theorem}\label{T2.6}
	For $N_1,N_2,k_1,k_2\geqslant 0$, $r\geqslant 1$,
	$$\sum_{n=0}^{\infty} \overline{Q}_r(N_1,N_2,k_1,k_2,n) q^n = q^{r\binom{k_1+1}{2}+\binom{k_2+1}{2}}\begin{bmatrix}N_1\\k_1\end{bmatrix}_{q^r} \begin{bmatrix}N_2\\k_2\end{bmatrix}_q.$$		
\end{theorem}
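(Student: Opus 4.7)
The plan is to combine Theorems \ref{T2.5} and \ref{T2.2} directly, with a simple index shift in between. Since Theorem \ref{T2.5} already gives a bijection identifying $\overline{Q}_r(N_1,N_2,k_1,k_2,n)$ with $\overline{p}_r(N_1-k_1,N_2-k_2,k_1,k_2,n-r\binom{k_1+1}{2}-\binom{k_2+1}{2})$, the proof essentially reduces to translating this identity at the level of generating functions.

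First I would write
$$\sum_{n=0}^{\infty}\overline{Q}_r(N_1,N_2,k_1,k_2,n)\,q^n =\sum_{n=0}^{\infty}\overline{p}_r\Bigl(N_1-k_1,N_2-k_2,k_1,k_2,n-r\tbinom{k_1+1}{2}-\tbinom{k_2+1}{2}\Bigr)q^n,$$
then substitute $m=n-r\binom{k_1+1}{2}-\binom{k_2+1}{2}$ so that the factor $q^{r\binom{k_1+1}{2}+\binom{k_2+1}{2}}$ pulls outside the sum, leaving a generating series in $q^m$ for $\overline{p}_r(N_1-k_1,N_2-k_2,k_1,k_2,m)$.

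Next I would invoke Theorem \ref{T2.2} (with $N_1$ replaced by $N_1-k_1$ and $N_2$ by $N_2-k_2$) to evaluate the remaining sum as
$$\begin{bmatrix}N_1\\N_1-k_1\end{bmatrix}_{q^r}\begin{bmatrix}N_2\\N_2-k_2\end{bmatrix}_{q},$$
and finally use the symmetry $\begin{bmatrix}N\\k\end{bmatrix}_{q^r}=\begin{bmatrix}N\\N-k\end{bmatrix}_{q^r}$ (valid for any $r\geqslant 1$ from the definition of the Gaussian polynomial) to rewrite this as $\begin{bmatrix}N_1\\k_1\end{bmatrix}_{q^r}\begin{bmatrix}N_2\\k_2\end{bmatrix}_{q}$, matching the claimed formula.

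There is no real obstacle here; the only thing to watch is that the index shift is harmless because $\overline{p}_r$ vanishes for negative argument, so extending the sum over $n$ to $0,1,2,\dots$ corresponds to the sum over $m\geqslant 0$ used in Theorem \ref{T2.2}. For this reason the statement is correctly phrased with an infinite sum on the left-hand side rather than a finite range.
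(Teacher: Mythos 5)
Your proposal is correct and follows exactly the route the paper intends: the paper gives no written proof, stating only that the result "can be easily derived from Theorems \ref{T2.2} and \ref{T2.5}," and your argument (apply the bijection of Theorem \ref{T2.5}, shift the summation index to extract $q^{r\binom{k_1+1}{2}+\binom{k_2+1}{2}}$, invoke Theorem \ref{T2.2} with $N_1-k_1$, $N_2-k_2$, and use the symmetry of the Gaussian polynomial) is precisely that derivation, with the vanishing of $\overline{p}_r$ at negative arguments correctly handling the index shift.
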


A similar result to Theorem \ref{T2.1} is also possible for $\overline{Q}_r(N_1,N_2,k_1,k_2,n)$ if we consider the number of partitions of $n$ into exactly $k$ distinct parts, each part less than or equal to $N$ which is denoted in \cite{Merca} by $Q(N,k,n)$. In fact, all results obtained for  $\overline{p}_r(N_1,N_2,k_1,k_2,n)$ can be rewritten in terms of the partition functions $\overline{Q}_r(N_1,N_2,k_1,k_2,n)$.

\section{Two partition formulas}

As we can see in \cite[Theorems 4.5 and 5.1]{Merca}, the partition function $p(N,k,n)$ can be expressed in terms of the partition function $\overline{p}_1(N_1,N_2,k_1,k_2,n)$. For instance, the identity
$$p(N,k,n) = \sum_{j=0}^{k} \overline{p}_1(N-j,k-j,j,j,n-j^2)$$
is a specialization of \cite[Theorem 5.1]{Merca}.
The following result shows that the partition function $p(N,k,n)$ can be expressed in terms of the partition function $\overline{p}_2(N_1,N_2,k_1,k_2,n)$.

\begin{theorem}\label{T3.1}
	For $N,k,n\geqslant 0$,
	$$p(N,k,n) = \sum_{j=0}^{\lfloor k/2 \rfloor} \overline{p}_2\Bigg( N,N+1-k+2j,j,k-2j,n-\binom{k-2j}{2}\Bigg).$$
\end{theorem}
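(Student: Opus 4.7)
The plan is to specialize the Guo--Yang identity \eqref{e2} and reinterpret both sides combinatorially using Theorem \ref{T2.2}.

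First, I would set $m = N$ and $n = k$ in \eqref{e2}, obtaining
$$\sum_{j=0}^{\lfloor k/2 \rfloor} \begin{bmatrix}N+j\\j\end{bmatrix}_{q^2} \begin{bmatrix}N+1\\k-2j\end{bmatrix}_q q^{\binom{k-2j}{2}} = \begin{bmatrix}N+k\\k\end{bmatrix}_q.$$
By \eqref{a1}, the right-hand side is the generating function $\sum_{n} p(N,k,n) q^n$, so it suffices to match the coefficient of $q^n$ on the left with the claimed sum of values of $\overline{p}_2$.

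Next, I would apply Theorem \ref{T2.2} with $r = 2$ and parameters $N_1 = N$, $k_1 = j$, $N_2 = N+1-k+2j$, $k_2 = k-2j$. With this choice one has $N_2 + k_2 = N+1$, so $\begin{bmatrix}N_2+k_2\\N_2\end{bmatrix}_q = \begin{bmatrix}N+1\\k-2j\end{bmatrix}_q$, and Theorem \ref{T2.2} gives
$$\begin{bmatrix}N+j\\j\end{bmatrix}_{q^2} \begin{bmatrix}N+1\\k-2j\end{bmatrix}_q = \sum_{n} \overline{p}_2\bigl(N, N+1-k+2j, j, k-2j, n\bigr) q^n.$$
Multiplying through by $q^{\binom{k-2j}{2}}$ shifts the coefficient index by $\binom{k-2j}{2}$, and summing over $j$ followed by equating coefficients of $q^n$ produces the stated formula.

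The only point requiring a sanity check is what happens when $N_2 = N+1-k+2j$ is negative; in that case $k-2j > N+1$, so the $q$-binomial $\begin{bmatrix}N+1\\k-2j\end{bmatrix}_q$ vanishes, and the corresponding term contributes nothing to either side, so the matching is consistent. I do not foresee a serious obstacle: the main task is simply to carry out the parameter identification carefully so that the two factors produced by Theorem \ref{T2.2} line up with the two $q$-binomials appearing in the Guo--Yang identity.
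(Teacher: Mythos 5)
Your proposal is correct and follows essentially the same route as the paper: specialize the Guo--Yang identity \eqref{e2} with $m=N$, $n=k$, expand each product $\begin{bmatrix}N+j\\j\end{bmatrix}_{q^2}\begin{bmatrix}N+1\\k-2j\end{bmatrix}_q$ via Theorem \ref{T2.2} with $N_1=N$, $k_1=j$, $N_2=N+1-k+2j$, $k_2=k-2j$, shift by $q^{\binom{k-2j}{2}}$, and equate coefficients of $q^n$. Your additional remark about terms with $N+1-k+2j<0$ vanishing is a harmless extra check that the paper leaves implicit.
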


\begin{proof}
	Taking into account the first identity of Guo and Yang \eqref{e2}, we can write
	\begin{align*}
	& \sum_{n=0}^{Nk} p(N,k,n) q^n \\
	& \qquad= \begin{bmatrix}N+k\\k\end{bmatrix}_q \\
	& \qquad= \sum_{j=0}^{\lfloor k/2 \rfloor} \begin{bmatrix}N+j\\j\end{bmatrix}_{q^2} \begin{bmatrix}N+1\\k-2j\end{bmatrix}_{q} q^{\binom{k-2j}{2}} \\
	& \qquad= \sum_{j=0}^{\lfloor k/2 \rfloor} \sum_{n=0}^{2Nj+(N+1-k+2j)(k-2j)} \overline{p}_2(N,N+1-k+2j,j,k-2j,n) q^{n+\binom{k-2j}{2}}\\
	& \qquad= \sum_{j=0}^{\lfloor k/2 \rfloor} \sum_{n=\binom{k-2j}{2}}^{Nk-\binom{k-2j}{2}} \overline{p}_2\Bigg( N,N+1-k+2j,j,k-2j,n-\binom{k-2j}{2}\Bigg)  q^{n}\\	
	& \qquad= \sum_{n=0}^{Nk} \sum_{j=0}^{\lfloor k/2 \rfloor}  \overline{p}_2\Bigg( N,N+1-k+2j,j,k-2j,n-\binom{k-2j}{2}\Bigg)  q^{n}.
	\end{align*} 
	The identity follows equating the coefficient of $q^n$ in this relation.
\end{proof}

As a consequence of this theorem, we remark the following formula for the partition function $p(n)$. 

\begin{corollary}\label{C3.2}
	For $n\geqslant 0$,
	$$p(n) = \sum_{j=\left\lceil \frac{n}{2}-\frac{1}{4}-\sqrt{\frac{n}{2}+\frac{1}{16}}\right\rceil}^{\lfloor n/2 \rfloor} \overline{p}_2\Bigg( n,n-2j,j,2j+1,n-\binom{n-2j}{2}\Bigg).$$
\end{corollary}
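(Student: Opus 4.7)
The plan is to obtain this corollary as a direct specialization of Theorem \ref{T3.1}, combined with the symmetry from Theorem \ref{T2.4} and a truncation of vanishing terms.

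First I would observe that $p(n,n,n)=p(n)$: any partition of $n$ automatically has all its parts $\leqslant n$ and uses at most $n$ parts. Setting $N=k=n$ in Theorem \ref{T3.1} gives
$$p(n)=\sum_{j=0}^{\lfloor n/2 \rfloor} \overline{p}_2\Bigg(n,\,2j+1,\,j,\,n-2j,\,n-\binom{n-2j}{2}\Bigg),$$
since $N+1-k+2j=n+1-n+2j=2j+1$. Next I would invoke the symmetry $\overline{p}_r(N_1,N_2,k_1,k_2,m)=\overline{p}_r(N_1,k_2,k_1,N_2,m)$ from Theorem \ref{T2.4}, which swaps the parameters $N_2=2j+1$ and $k_2=n-2j$, producing the summand exactly as it appears in the corollary.

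Finally, I would tighten the summation range by noting that $\overline{p}_r(N_1,N_2,k_1,k_2,m)=0$ whenever $m<0$, so the index $j$ contributes nothing unless $n-\binom{n-2j}{2}\geqslant 0$. Writing $t=n-2j$, this becomes $t(t-1)\leqslant 2n$, i.e.\ $t^2-t-2n\leqslant 0$, whose positive root is $t=\tfrac12+\tfrac12\sqrt{1+8n}$. Solving for $j$ yields
$$j\geqslant \frac{n}{2}-\frac{1}{4}-\frac{\sqrt{1+8n}}{4}=\frac{n}{2}-\frac{1}{4}-\sqrt{\frac{n}{2}+\frac{1}{16}},$$
and since $j$ is an integer we may start the sum at the ceiling of the right-hand side, giving precisely the stated lower limit.

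The only mildly delicate step is verifying that this lower bound is exactly where terms begin to be nonzero (rather than merely a necessary condition); this amounts to checking that the quadratic estimate on $t$ is sharp, which follows immediately from the fact that $\binom{n-2j}{2}$ is strictly increasing in $t=n-2j$ for $t\geqslant 1$. No other obstacles arise.
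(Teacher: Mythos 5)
Your derivation is correct and is essentially the paper's intended route: the paper states the corollary as an immediate consequence of Theorem \ref{T3.1} with $N=k=n$, and you supply the two details it leaves implicit, namely the swap of $N_2$ and $k_2$ via Theorem \ref{T2.4} to match the stated argument order, and the truncation of the terms with $n-\binom{n-2j}{2}<0$. (Your closing remark about sharpness of the lower limit is unnecessary: the identity only requires that the omitted terms vanish, which your bound already shows.)
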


It is clear that the expansion of $p(n)$ by this corollary requires about $1+\sqrt{n/2}$ terms. For example, the case $n=6$ of this corollary read as
$$p(6)=\overline{p}_2(6,4,1,3,0)+\overline{p}_2(6,2,2,5,5)+\overline{p}_2(6,0,3,7,6)=1+7+3=11.$$ 

The partition functions $\overline{p}_2(N_1,N_2,k_1,k_2,n)$ and $\overline{p}_4(N_1,N_2,k_1,k_2,n)$ are related by the following identity.

\begin{theorem}\label{T3.3}
	For $N,k,n\geqslant 0$,
	$$\sum_{j=0}^{\lfloor k/4 \rfloor} \overline{p}_4\Bigg( N,N+1-k+4j,j,k-4j,n-\binom{k-4j}{2}\Bigg)
	=\sum_{j=0}^{\lfloor k/2 \rfloor} (-1)^j \overline{p}_2\left( N,N,j,k-2j,n\right).$$
\end{theorem}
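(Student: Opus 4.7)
The plan is to adapt the proof of Theorem~\ref{T3.1} by substituting the second Guo--Yang identity \eqref{e3} for the first one \eqref{e2}. Setting $m\to N$ and (the dummy) $n\to k$ in \eqref{e3} gives a polynomial identity in $q$; applying Theorem~\ref{T2.2} to each product of Gaussian polynomials rewrites both sides as generating functions of suitable $\overline{p}_r(\cdots)$ sequences, and equating coefficients of $q^n$ should then deliver the stated formula.

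Concretely, on the left-hand side of \eqref{e3} the $j$-th summand is
$$\begin{bmatrix}N+j\\j\end{bmatrix}_{q^4}\begin{bmatrix}N+1\\k-4j\end{bmatrix}_{q}q^{\binom{k-4j}{2}}.$$
Using the symmetry $\begin{bmatrix}N+1\\k-4j\end{bmatrix}_q=\begin{bmatrix}(N+1-k+4j)+(k-4j)\\N+1-k+4j\end{bmatrix}_q$, Theorem~\ref{T2.2} with $r=4$, $N_1=N$, $k_1=j$, $N_2=N+1-k+4j$, $k_2=k-4j$ identifies the product of $q$-binomials as the generating function of $\overline{p}_4(N,N+1-k+4j,j,k-4j,\cdot)$. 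The extra factor $q^{\binom{k-4j}{2}}$ shifts the series, so that the coefficient of $q^n$ in the $j$-th term is exactly $\overline{p}_4\bigl(N,N+1-k+4j,j,k-4j,n-\binom{k-4j}{2}\bigr)$. On the right-hand side, the product $\begin{bmatrix}N+j\\j\end{bmatrix}_{q^2}\begin{bmatrix}N+k-2j\\k-2j\end{bmatrix}_q$ is, via the same Gaussian-binomial symmetry applied to the second factor, exactly the generating function furnished by Theorem~\ref{T2.2} with $r=2$, $N_1=N_2=N$, $k_1=j$, $k_2=k-2j$. Hence its coefficient of $q^n$ is $\overline{p}_2(N,N,j,k-2j,n)$, and the sign $(-1)^j$ is simply carried through.

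Equating the coefficients of $q^n$ in \eqref{e3} then yields the theorem. I do not expect any serious obstacle: the argument is an almost mechanical translation of \eqref{e3} via Theorem~\ref{T2.2}, so the only thing one has to do carefully is the bookkeeping of the exponent shift $q^{\binom{k-4j}{2}}$ when converting the LHS into a series in $q^n$. Index ranges take care of themselves because in those cases where the translated size parameter $N+1-k+4j$ would be negative the underlying $q$-binomial $\begin{bmatrix}N+1\\k-4j\end{bmatrix}_q$ vanishes; the same polynomial identity therefore holds both as a statement about $q$-series and, after extracting $[q^n]$, as the claimed partition-theoretic identity.
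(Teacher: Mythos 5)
Your proposal is correct and is essentially the paper's own argument: the paper likewise specializes \eqref{e3} with $m\to N$, uses Theorem~\ref{T2.2} (with $r=4$ on the left and $r=2$ on the right) to convert each product of Gaussian polynomials into a generating function, shifts by $q^{\binom{k-4j}{2}}$, and equates coefficients of $q^n$. Nothing further is needed.
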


\begin{proof}
	We have
	\begin{align*}
	& \sum_{j=0}^{\lfloor k/4 \rfloor} \begin{bmatrix}N+j\\j\end{bmatrix}_{q^4} \begin{bmatrix}N+1\\k-4j\end{bmatrix}_q q^{\binom{k-4j}{2}}\\
	& \qquad = \sum_{j=0}^{\lfloor k/4 \rfloor} \sum_{n=0}^{4Nj+(N+1-k+4j)(k-4j)} \overline{p}_4(N,N+1-k+4j,j,k-4j,n) q^{n+\binom{k-4j}{2}}\\
	& \qquad = \sum_{j=0}^{\lfloor k/4 \rfloor} \sum_{n=\binom{k-4j}{2}}^{Nk-\binom{k-4j}{2}} \overline{p}_4\Bigg( N,N+1-k+4j,j,k-4j,n-\binom{k-4j}{2}\Bigg)  q^{n}\\
	& \qquad = \sum_{n=0}^{Nk} \sum_{j=0}^{\lfloor k/4 \rfloor}  \overline{p}_4\Bigg( N,N+1-k+4j,j,k-4j,n-\binom{k-4j}{2}\Bigg)  q^{n}	
	\end{align*}
	and
	\begin{align*}
	& \sum_{j=0}^{\lfloor k/2 \rfloor} (-1)^j \begin{bmatrix}N+j\\j\end{bmatrix}_{q^2} \begin{bmatrix}N+k-2j\\k-2j\end{bmatrix}_q \\
	& \qquad = \sum_{j=0}^{\lfloor k/2 \rfloor} \sum_{n=0}^{2Nj+N(k-2j)} (-1)^j \overline{p}_2(N,N,j,k-2j,n) q^{n}\\
	& \qquad = \sum_{n=0}^{Nk} \sum_{j=0}^{\lfloor k/2 \rfloor}  \overline{p}_2\left( N,N,j,k-2j,n\right)  q^{n}.
	\end{align*}
	The identity follows easily considering the identity \eqref{e3}.
\end{proof}

Theorems \ref{T3.1} and \ref{T3.3} provide new combinatorial interpretations for the  identities \eqref{e2} and \eqref{e3} of Guo and Yang.





\bigskip



\begin{thebibliography}{00}



\bibitem{Andrews76} G.E. Andrews, \textit{The Theory of Partitions}, Addison-Wesley Publishing Co., Reading, 1976.

\bibitem{GuoYang} V.J.W. Guo, D.-M. Yang, A $q$-analogue of some binomial coefficient identities of Y. Sun, \textit{Electron. J. Combin.}, 18 (2011), \#P78.

\bibitem{Merca1} M. Merca, Generalizations of two identities of Guo and Yang, \textit{Quaest. Math.}, 41(5) (2018) 643--652.

\bibitem{Merca} M. Merca, Combinatorial interpretations of $q$-Vandermonde's identities, \textit{Ann. Acad. Rom. Sci. Ser. Math. Appl.}, 11(1) (2019) 98--114.




\end{thebibliography}
\end{document}